\newtheorem{thm}{Theorem}
\newtheorem{proposition}[thm]{Proposition}
\newtheorem{lemma}[thm]{Lemma}
\theoremstyle{definition}
\newtheorem{alg}[thm]{Algorithm}
\theoremstyle{remark}
\newtheorem{rem}[thm]{Remark}
\title{Polynomial algorithm for alternating link equivalence}
\author{Touseef Haider and Anastasiia Tsvietkova}
\date{}
\begin{document}

\maketitle

\begin{abstract}
Link equivalence up to isotopy in a 3-space is the problem that lies at the root of knot theory, and is important in 3-dimensional topology and geometry. We consider its restriction to alternating links, given by two alternating diagrams with $n_1$ and $n_2$ crossings, and show that this problem has polynomial algorithm in terms of $max\{n_1, n_2\}$. For the proof, we use Tait flyping conjectures, observations stemming from the work of Lackenby, Menasco, Sundberg and Thistlethwaite on alternating links, and algorithmic complexity of some problems from graph theory and topological graph theory. 
\end{abstract}

\section{Introduction}
\label{sec:introduction}

One link can be represented by many diagrams, \textit{i.e.} link projections onto a plane together with information about over and underpasses. It is non-trivial to determine if two link diagrams represent the same link up to isotopy in $\mathbb{R}^3$ or $S^3$. The following problem lies at the heart of classical knot theory, and was notably mentioned by Turing \cite{Turing} for one particular link, called the unknot.

\vspace{0.08in}
\textsc{link equivalence.} \textit{Given two link diagrams, do they represent the same link?}
\vspace{0.08in}

In recent years, interest in the algorithmic complexity of problems from 3-dimensional topology and knot theory has been growing. Some problems were proved to be computationally hard, i.e. NP-hard (\cite{AHT, KT, KoTs, MRST1, MRST2, MSS, Lackenby1}). At the same time, almost nothing is known about the existence of such computationally easy non-trivial problems, i.e. the existence of polynomial algorithms in knot theory and 3-manifold topology. Here, we address restriction of \textsc{link equivalence} to one infinite family, alternating links, and prove that this restriction is in class P (also called PTIME), when the input is two alternating link diagrams. Here is the restricted problem:

\vspace{0.08in}
\textsc{alternating link equivalence.} \textit{Given two alternating diagrams of a link, do they represent the same alternating link?}
\vspace{0.08in}

In 1960's and 1970's, \textsc{link equivalence} was proven to be decidable by Haken and Hemion \cite{Hemion, Haken} using theory of normal surfaces. More recent proof, from 2011, is by Coward and Lackenby \cite{CoLack}, and uses Reidemeister moves. Such diagrammatic moves were proved to connect any two diagrams of the same link in 1920's by Alexander, Briggs and Reidemeister \cite{AlBr, Reid}. Another proof that \textsc{link equivalence} is decidable can be obtained from the problem of 3-manifold homeomorphism, by passing from a link diagram to the triangulation of the link complement in 3-sphere. As detailed in the work of Kuperberg \cite{Kuperberg}, the 3-manifold homeomorphism problem is known to be decidable due to the proof of Geometrization (conjectured and partly proven by Thurston \cite{Thurston}, and completed by Perelman based on the program of Hamilton \cite{MoTsz}). While links are not uniquely determined by their complements, one can check whether there is a homeomorphism between the link exteriors taking meridians to meridians (see Corollary 6.1.4 in Matveev's book \cite{Matveev}), and this is enough for link isotopy. There is also an alternative solution to the homeomorphism problem for hyperbolic link complements by Dahmani and Groves \cite{DaGro}, based on work of Sela \cite{Sela} on isomorphism problem for hyperbolic groups. Meanwhile, practical link tabulation, often using supercomputers, is underway \cite{HTWTab, BurtonTab, ThisTab}. 

Alternating links are an important class of links: for example, they can be characterized purely topologically \cite{Howie, Greene}, and their topological and geometric properties can often be seen or computed directly from a minimal crossing link diagram \cite{Menasco1, Lackenby, TT}. Over a century ago, Tait proposed the following conjecture: given prime alternating reduced diagrams $D_1$, $D_2$ of a link, one can transform $D_1$ into $D_2$ (up to planar isotopy) by a sequence of diagrammatic moves called flypes. A flype is depicted in Figure \ref{flype}. The shaded disc labelled $T$ represents a 2-tangle in the terminology of Conway (\textit{i.e.} a tangle with four free ends). In performing a flype, $T$ is turned by 180 degrees so that the crossing to its left is removed by twisting this tangle, while a new crossing is created to its right. The conjecture was proved in full generality by Menasco and Thistlethwaite as Theorem 1 in \cite{MT}. We will further call it Tait flyping theorem.

\begin{figure}
\begin{subfigure}[b]{0.49 \textwidth}
\centering
\includegraphics[scale=0.6]{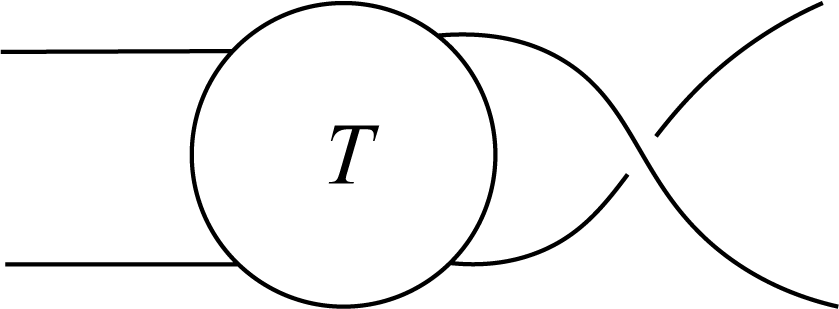}
\subcaption{}
\label{Flype1}
\end{subfigure}
\begin{subfigure}[b]{0.49 \textwidth}
\centering
\includegraphics[scale=0.6]{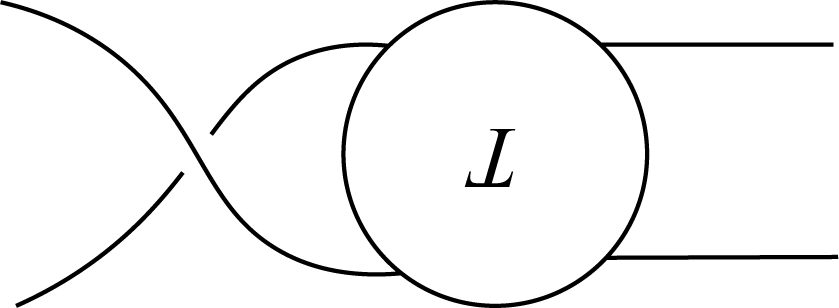}
\subcaption{}
\label{Flype2}
\end{subfigure}
\caption{A flype involving a 2-tangle $T$.}
\label{flype}
\end{figure}

The conjecture that \textsc{alternating link equivalence} is polynomial has circulated for a while. It has also been known that an existence of a polynomial algorithm for connected prime alternating reduced diagrams implies the existence of such algorithm for all alternating diagrams as a corollary of 1981 Menasco's PhD thesis and paper \cite{Menasco1}. This was, for example, used for a rapid and simple algorithm that decides whether an alternating diagram represents the unknot in Section 7 of Lackenby's 2017 lecture notes \cite{LackNotes}. So the main open question was the complexity of link equivalence if links are given by connected prime alternating reduced diagrams. It was noticed independently by different authors that flypes enforce even more structure on such diagrams than the Tait flyping theorem states: this was captured in the proof of this theorem by Menasco and Thistlethwaite \cite{MT}, but also in the observations about flype circuits in the 1998 work of Sundberg and Thistlethwaite on the rate of growth of alternating links \cite{ST}, and about characteristic squares and product regions in 2004 Lackenby's work on volume of alternating links \cite{Lackenby}. It was suggested that this extra structure might be a key to the polynomial algorithm (for example, by Lackenby), and yet the algorithm was never written. The conjecture and some of the previosly known observations about non-prime, non-reduced diagrams were recorded in algorithmic context in 2023 paper by Samperton that concerned quantum invariants of knots \cite{Samperton}. 

In this paper, we give the main piece: the polynomial algorithm for links presented by connected prime alternating reduced diagrams. In addition to the extra structure imposed on a link diagram by flypes, we use known problems from graph theory and topological graph theory for steps of the algorithm. Note that link isotopy problem cannot be reduced to any variation of graph isomorphism problem: one link can have many diagrams that give rise to many graphs. Instead, we detect tangles that must be equivalent in certain sense due to the aforementioned extra structure, and use plane embeddings of graphs and ordered lists of colors to compare them inductively, in a certain hierarchy.

Note that our algorithm does not immediately apply to alternating links that are not given by alternating diagrams. If a non-alternating diagram is given, there is an algorithm that determines whether the link is alternating given by Howie \cite{Howie}. If it is, the algorithm produces its alternating diagram up to chirality. The complexity of Howie's algorithm is exponential, since it uses normal surface theory.

A restricted problem, equivalence for knots (\textit{i.e.} links with just one connected component) with alternating diagrams that satisfy some additional restrictions, was considered in \cite{KhVd}, and was shown to have low time and space complexity using different methods.

It is not yet known if 
\textsc{link equivalence} in general might be NP-hard. The following related problem was shown to be NP-hard by Koenig and Tsvietkova in \cite{KoTs}:

\vspace{0.08in}
\textsc{alternating sublink}: Given a diagram of a link $L$ and a positive integer $k$, does $L$ have a $k$-component sublink that is alternating?
\vspace{0.08in}

This can be compared with the fact that \textsc{subgraph isomorphism} is NP-hard \cite{Cook}.

In what follows, we start by recalling some basic facts from knot theory, graph theory and complexity theory in Section \ref{pre}. We then show in Section \ref{flypes} that there are at most $2n^2$ flypes between two prime, alternating, reduced and connected diagrams of a link $L$ with $n$ crossings (Proposition \ref{bound}). In Section \ref{algorithm}, we outline the topological facts necessary for the algorithm justification, and give the algorithm that compares two such diagrams to determine if they belong to the same link. We prove that the algorithm works exactly as it should in Theorem \ref{algworks}. In Section \ref{Poly}, we check that the number of steps in the algorithm is polynomial in $n$.

\section{Acknowledgements}

We thank Marc Lackenby and Arnaud De Mesmay for helpful discussions and correspondence. Among other things, Marc Lackenby pointed us to some of the related statements in his work \cite{Lackenby} back in 2010's, and Arnaud De Mesmay gave us some helpful background concerning algorithms for plane graphs. We also thank Sofia Dinets for the help with some figures. Tsvietkova was partially supported by NSF DMS-2005496 and NSF CAREER DMS-2142487.

\section{Preliminaries: alternating links, their diagrams, and graphs}\label{pre}


\subsection{Some basics from knot theory.} We think of link diagrams in $\mathbb{R}^2$ or $\mathbb{S}^2$ interchangeably, only with some language differences: e.g. one speaks of a side of a closed curve in $\mathbb{S}^2$ versus being inside or outside of it in $\mathbb{R}^2$. 

Let $L$ be an alternating link. Consider a connected prime alternating reduced diagram $D$ of $L$ with $n$ crossings. \textbf{In Sections \ref{pre}-\ref{algorithm}, we will only work with such diagrams.} Due to Menasco's results \cite{Menasco1}, the link $L$ must be topologically prime and non-split. Reduced alternating diagram must also have the minimal crossing number, say $n$, due to another Tait conjecture, proved by Kauffman, Murasugi, and Thistlethwaite \cite{Kauf, Murasugi, Thi}.

An \textit{edge} of a link diagram is its segment (an arc) from a crossing to the nearest crossing. Any link diagram with $n$ crossings has $2n$ edges.  A region of a link diagram (further \textit{diagram region}) is bounded by crossings and edges, and has none in its interior, \textit{i.e.} is complementary. A diagram region bounded by only two edges and two crossings is a \textit{bigon}. Any maximal sequence of bigons in $D$, where each two consecutive bigons share a crossing, is called a \textit{twist}. For a twist, a subset of consecutive bigons is a \textit{subtwist}. A subtwist can be maximal as well, \textit{i.e.} equal to a twist.

A 2-tangle in the terminology of Conway is a tangle with four free ends. 2-tangles in 3-space project to 2-tangles in $\mathbb{R}^2$ or $\mathbb{S}^2$. For 2-tangles in 3-space, we use ambient isotopy rel boundary, not allowing tangle ends to move. At the same time, there is usually no distinction between the four tangle ends unless we label the ends (we often will). Planar isotopy of 2-tangles implies their ambient isotopy rel boundary.

\subsection{Squares, tangles, and flypes.} Let us recall the terminology from Section 4 of Lackenby's paper \cite{Lackenby}. 

A \textit{square} is a simple closed curve that intersects a link diagram four times. If the intersections lie on four different edges of the link diagram, the square is \textit{essential}. A square is \textit{characteristic} if it is essential, does not separate off a single crossing on either side, and any other square can be isotoped of it, where the isotopy is planar (this means pull one square continuously of another, eliminating all their intersections; in the process any intersection of each square with $D$ must stay in the same diagram edge where it was, and no new intersections with $D$ can be introduced). Figure \ref{Nonchar} gives an example of two squares that are not characteristic. A \textit{characteristic collection} of squares is defined as follows: take one isotopy class of each characteristic square, and isotope them so that they are all disjoint.

\begin{figure}
\begin{subfigure}[b]{0.49 \textwidth}
\centering
\includegraphics[scale=1.1]{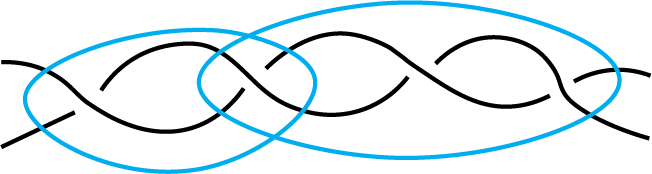}
\subcaption{}
\label{Nonchar}
\end{subfigure}
\begin{subfigure}[b]{0.49 \textwidth}
\centering
\includegraphics[scale=0.7]{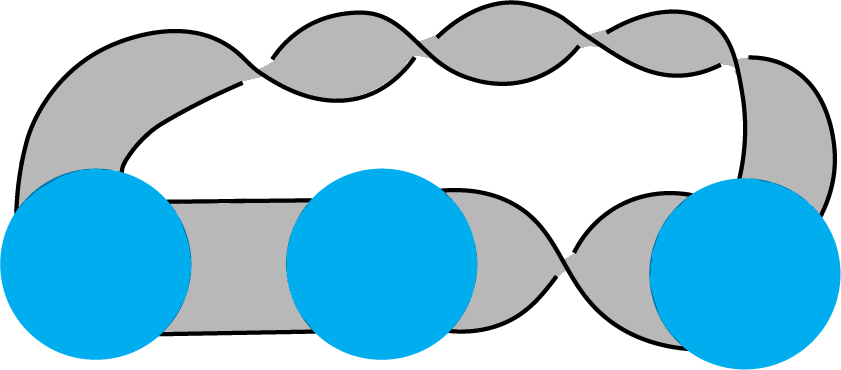}
\subcaption{}
\label{Product}
\end{subfigure}
\caption{(A): a twist from a link diagram (in black) and two squares that are not characteristic, since they cannot be isotoped of each other. (B): essential squares (in blue) and their complementary regions. The product region is colored in grey.}

\end{figure}

An essential square corresponds to a 2-tangle that has at least one crossing. Such a tangle can be the tangle $T$ in the definition of a flype. If $T$ has more than one crossing, we will say that the flype is \textit{non-trivial}. A trivial flype (for a tangle $T$ with 0 or 1 crossings) does not change an alternating link diagram. Hence in what follows, we will only need to consider non-trivial flypes.

The following definition is crucial for the algorithm. A version of it can be found in \cite{Lackenby}; we rephrase it slightly. 

Color a collection $\mathcal{C'}$ of essential squares in blue. Color the tangle each square in $\mathcal{C'}$ encircles on one of its sides in blue as well. Consider $D$ without blue tangles, and denote it by $F$. The regions bounded by arcs of $F$ (edges) and blue arcs on the boundary of squares can be colored in black and white in checkerboard fashion, since diagram regions of any reduced alternating diagram can be colored this way. If for color X (X is black or white), all regions of color X intersect crossings of $F$ and blue squares in only two connected components (e.g. in two crossings, or in a crossing and a blue arc), and if all regions of color X are pairwise connected through crossings or pairwise adjacent to the same blue square, call the union of all regions of color X a complementary \textit{product region} for $\mathcal{C'}$. Lackenby's observation in Lemma 8 of \cite{Lackenby} implies that a product region consists of twists and tangles beaded on two strands of a link as in Figure \ref{Product}. This construction was also considered earlier by Sundberg and Thistlethwaite \cite{ST}, and called a flyping circuit there. 

The above definition is for diagrams in $\mathbb{S}^2$: in particular, a product region can lie on any side of a square. Figure \ref{Product} shows one black product region for the blue squares.

Two distinct product regions can be \textit{adjacent} to each other as follows: a square of $\mathcal{C}$ might have two product regions on different sides of it (inside and outside, if $D$ is in $\mathbb{R}^2$), with the two regions being connected through some edges connected in $D$. For example, the link diagram in Figure \ref{link} (A) has adjacent product regions.

We will use the following lemma.

\begin{lemma}{\cite[Lemma 9 and Lemma 10]{Lackenby}}\label{lemma} Let $\mathcal{C}$ be a collection of disjoint non-parallel essential squares. It is a characteristic collection if and only if all of the following hold: 
\begin{enumerate}
\item any essential square in the complement of $\mathcal{C}$ lies in a product complementary region, or is parallel to a square of $\mathcal{C}$, or encloses a single crossing;

\item if two product complementary regions of $\mathcal{C}$ are adjacent, they have incompatible product structures;

\item no square of $\mathcal{C}$ encloses a single crossing.

\end{enumerate}
\end{lemma}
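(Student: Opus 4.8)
The plan is to prove the two implications separately, with a common engine: the structural description of essential squares in a connected, prime, reduced, alternating diagram. By the Menasco--Thistlethwaite analysis \cite{MT}, as packaged in Lemma 8 of \cite{Lackenby} and seen earlier in the flype circuits of \cite{ST}, whenever two essential squares are non-parallel and cannot be isotoped off one another, they both lie in, and are compatible with, a common product region, i.e.\ a sequence of twists and tangles beaded on two strands; moreover a square that crosses another square in this way is exactly the data of a non-trivial flype. We also use the elementary observation that an essential square cuts $\mathbb{S}^2$ into two disks each carrying a $2$-tangle with at least one crossing, since a crossing-free side would force two of the four intersection points onto a common edge of $D$, violating essentiality; and we recall that "parallel" means "in the same planar isotopy class."

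For the forward implication, assume $\mathcal{C}$ is a characteristic collection. Condition (3) is immediate from the definition of a characteristic square. For (1), let $S'$ be an essential square contained in a complementary region $R$ of $\mathcal{C}$. If $S'$ is parallel to a square of $\mathcal{C}$ we are done; otherwise, since $\mathcal{C}$ contains a representative of every isotopy class of characteristic square, $S'$ is not characteristic, so some essential square crosses it. By the structural input this forces a product structure around $S'$, and since $\mathcal{C}$ already separates off every characteristic square there is no essential square subdividing $R$ further; hence $R$ itself is a product region, unless $S'$ encloses a single crossing. For (2), suppose two adjacent product regions $R_1,R_2$ meet across a square $S_0 \in \mathcal{C}$ with compatible product structures. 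Then $R_1 \cup S_0 \cup R_2$ is itself a single product region, and $S_0$ is one of its internal, bead-separating squares; exactly as in a twist region, such a square is crossed by another internal square, so $S_0$ is not characteristic, contradicting that $\mathcal{C}$ is a characteristic collection. Therefore adjacent product regions must have incompatible structures.

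For the converse, assume (1)--(3). First we show each $S \in \mathcal{C}$ is characteristic. It is essential by hypothesis and, by (3), encloses no single crossing. Suppose some essential square $S'$ cannot be isotoped off $S$; put the pair in minimal position, so $S'$ still meets $S$, splitting $S'$ into arcs lying on the two sides of $S$. By the structural input, such a configuration exhibits a product structure straddling $S$; invoking (1) on the complementary region of $\mathcal{C}$ on each side of $S$ shows each of those two regions is a product region, and the two product structures agree across $S$ because they come from the single square $S'$. But then $S$ has adjacent product regions on its two sides with compatible structures, contradicting (2). Hence no square crosses $S$, so $S$ is characteristic. Finally we prove completeness: let $S'$ be any characteristic square. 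Since each square of $\mathcal{C}$ is characteristic, $S'$ can be isotoped off all of $\mathcal{C}$ into a complementary region $R$ (the isotopy keeps the four intersection points in their edges, so it introduces no new intersections with $D$). By (1), either $S'$ is parallel to a square of $\mathcal{C}$, hence already represented in $\mathcal{C}$; or $S'$ encloses a single crossing, impossible for a characteristic square; or $R$ is a product region, in which case the beaded structure of $R$ supplies an essential square $S'' \subset R$ separating the beads on one side of $S'$ from those on the other, so $S''$ crosses $S'$ and $S'$ is not characteristic -- again a contradiction. Thus every characteristic square is parallel to a square of $\mathcal{C}$, and $\mathcal{C}$, being a disjoint family with exactly one representative per isotopy class of characteristic square, is a characteristic collection.

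The step I expect to be the main obstacle is the structural input: proving that a configuration of two essential squares that cannot be separated (equivalently, a complementary region carrying a genuinely nested essential square) must come from a product region beaded on two strands. This is where the alternating, prime and reduced hypotheses are indispensable, and where one must invoke incompressibility and meridional incompressibility of the relevant surfaces in the Menasco--Thistlethwaite framework, as distilled in Lemma 8 of \cite{Lackenby}. By comparison the surrounding arguments -- minimal position, innermost arcs, and pushing arcs across squares of $\mathcal{C}$ -- are routine in spirit, but still need care: one must treat non-essential squares and bigons separately and use the checkerboard colouring to verify that each such move genuinely lowers the intersection number rather than merely relocating an intersection.
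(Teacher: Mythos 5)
The first thing to note is that the paper does not prove this lemma at all: it is imported verbatim from Lackenby (Lemmas 9 and 10 of \cite{Lackenby}), so there is no in-paper argument to compare yours against. Judged on its own terms, your sketch does reproduce the correct logical skeleton of Lackenby's proof --- condition (3) straight from the definition of a characteristic square, (1) and (2) from the fact that a square crossed by another square must sit inside a product region, and the converse by first showing each square of $\mathcal{C}$ is characteristic and then that the collection is complete.

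However, essentially all of the mathematical content is concentrated in what you call the ``structural input'': the claim that two essential squares which cannot be isotoped off one another must lie in a common product region beaded on two strands. You assume this outright, and it is precisely the hard part --- it is the content of Lemma 8 of \cite{Lackenby}, resting on the Menasco--Thistlethwaite analysis of surfaces in alternating link complements --- so as written the argument is close to circular: the lemma being proved is largely a repackaging of that structural fact. Two smaller deductive gaps. First, in the converse you ``invoke (1) on the complementary region of $\mathcal{C}$ on each side of $S$,'' but (1) speaks of essential squares lying in complementary regions, whereas what you actually have on each side of $S$ are arcs of $S'$; you need a cut-and-paste surgery along $S$ to manufacture a square on each side, and then verify it is essential (meets four distinct edges, does not bound a crossing-free disk) before (1) can be applied. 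Second, in the forward direction the inference ``since $\mathcal{C}$ already separates off every characteristic square there is no essential square subdividing $R$ further; hence $R$ itself is a product region'' does not follow on its own --- the absence of further characteristic squares in $R$ does not by itself endow $R$ with the beaded two-strand structure; that again requires the structural result. If you intend this as a self-contained proof rather than a reduction to \cite{Lackenby}, those are the places that must be filled in; if you intend it as a reduction, it is a fair reconstruction of the strategy, and the paper's own choice to cite the result is the pragmatic one.
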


\subsection{Graphs and plane graphs.}\label{graphs} We recall some classical problems about graphs. 

Suppose we have a graph $\Gamma=(V, E)$ and function $f: V \rightarrow A$, where $A$ is a section of $\mathbb{Z_{+}}$, \textit{i.e.} the initial segment of $\{1, 2, 3, ...\}$. We then call $f$ a \textit{coloring function}, $\Gamma$ a \textit{vertex colored graph}, and $f^{-1}(i)$ the \textit{color class} of $i$ or of a vertex colored with $i$. We say that \textit{color valence} of $\Gamma$ is less or equal than $d$ with respect to $f$, if for every $v$ and $i$, either the number of neighbors or the number of non-neighbors of $v$ in $f^{-1}(i)$ is less or equal than $d$.

The following is a well-studied decision problem.

\vspace{0.08in}

\textsc{vertex colored graph isomorphism with bounded color valence.} Given two vertex colored graphs with bounded color valence, are they isomorphic?

\vspace{0.08in}

The isomorphism problem for colored graphs with bounded color valence is known to be solvable in polynomial time. For example, Prop. 4.5 of \cite{BL} assures that canonical form of a vertex colored graph with color valence at most $d$ can be computed in $|V|^{P(d)}$ steps, where $P(d)$ is a polynomial that depends only on $d$. The graph canonization problem is at least as computationally hard as the graph isomorphism problem, so \textsc{vertex colored graph isomorphism with bounded color valence} has computational complexity at most $|V|^{P(d)}$ as well. This can be compared with \textsc{graph isomorphism}, which is not known to lie in P (and thus the problem for colored graphs is potentially easier).

A \textit{pseudograph} is a graph that allows multiple edges between two vertices as well as \textit{loops}, \textit{i.e.} edges that connect a vertex to itself. \textsc{pseudograph isomorphism with bounded valence} can be reduced to \textsc{graph isomorphism with bounded valence} by adding extra vertices subdividing some edges. The resulting graph is called a \textit{subdivision graph}. If we start with a pseudograph with $n$ vertices and $2n$ edges, no loops, some double edges, and each vertex having valence 4, the subdivision graph is obtained in polynomial number of steps in terms of $n$, has linearly many edges and vertices in terms of $n$, and valence (and hence color valence) bounded above by 4. In particular, \textsc{graph isomorphism with bounded valence} is at least as hard as \textsc{pseudograph isomorphism with bounded valence}, and the former problem is in P as was proved by Luks \cite{Luks}. Similarly, \textsc{vertex colored graph isomorphism with bounded color valence} is at least as hard as  \textsc{vertex colored pseudograph isomorphism with bounded color valence}, and so both are in P.

Recall that a graph can be embedded in the plane if it can be drawn in the plane without crossings between edges. Such graphs are called \textit{planar}. There can be many plane embeddings of one planar graph. In some literature, a graph together with a fixed embedding is called a \textit{plane graph}. Such graphs are considered up to isotopy, which is a stronger notion than graph isomorphism: in particular, such isotopy is the map between the graphs that can be extended to a map from the plane to itself, isotopic to the identity. We will call the problem of comparing such graphs \textsc{plane graph isotopy}: it has been studied extensively in topological graph theory, and is known to be solvable in polynomial time. (For more on this problem, see, for example, \cite{Cori1, Cori2, CM}) .

Using the known algorithms, one obtains that comparing two plane vertex colored plane graphs or pseudographs up to isotopy preserving colors is in class P. For example, one way to do it for vertex colored connected plane graphs  $\Gamma_1$ and $\Gamma_2$ is as follows.

Pick an arbitrary vertex $v_1$ in $\Gamma_1$ and an edge $e_1$ adjacent to it. For each vertex $v_2$ in $\Gamma_2$, with the color $f(v_1)=f(v_2)$, and an incident edge $e_2$, consider a map from the pair $(v_1, e_1)$ to the pair $(v_2, e_2)$. For two planar embeddings to be isotopic, the cyclic order of the edges around equivalent vertices should be the same. Hence, by mapping a vertex and an edge, we uniquely (up to the change of clockwise/anticlockwise direction) determine how the edges adjacent to $v_1$ are mapped into the edges adjacent to $v_2$. Once the map has been extended to all edges adjacent to $v_1, v_2$, this also determines the map for vertices on the other end of the edges that were mapped. We first compare colors for these newly mapped vertices, and, if they coincide, proceed. Suppose such a vertex $v_1'$ in $\Gamma_1$ is mapped to $v_2'$ in $\Gamma_2$. Since for $v_1', v_2'$ at least one incident edge has already been mapped, we can use it to map other edges respecting the cyclic order again. Proceed by extending the map locally this way to all vertices and edges. This process has at most linearly many steps in terms of $|V|$. If for one choice of $(v_2, e_2)$, we were able to extend the map to the whole graph, there is an isotopy of plane graphs preserving colors. Otherwise, after trying all the possibles choices for $(v_2, e_2)$, there is no such isotopy. The total number of steps is at most $O(|V|^2)$.

\section{Bound on the number of flypes}\label{flypes}

Bounds on purely diagrammatic moves that take one link diagram to another one have been a rich topic of study. Previous work concerns mostly Reidemeister moves (e.g. \cite{CoLack, Lackenby2}), but it is perhaps useful to have a simple bound on the number of flypes as well. More importantly for us, we establish a relation between potential flypes and squares of characteristic collection in the proof of the following proposition. We will use this relation later for the polynomial algorithm.

\begin{proposition}\label{bound} Suppose we are given two diagrams of a link $L$, with each diagram being connected, prime, alternating, and reduced. Then at most $2n^2$ flypes are needed to transform one diagram to another, where $n$ is the crossing number.
\end{proposition}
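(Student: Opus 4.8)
The plan is to count flypes by associating each potential flype with a square in the characteristic collection of a diagram, and then bounding how many times we can flype along each such square.

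\medskip

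First I would recall from the Tait flyping theorem that any two connected, prime, alternating, reduced diagrams of $L$ are related by a finite sequence of flypes (and planar isotopy). A non-trivial flype is performed along an essential square enclosing a $2$-tangle with at least two crossings. By Lemma \ref{lemma}, every essential square in such a diagram is, up to planar isotopy, either parallel to a square of the characteristic collection $\mathcal{C}$, lies in a product complementary region, or encloses a single crossing. Squares enclosing a single crossing give only trivial flypes, and squares inside a product region correspond to flypes that merely shuffle bigons/tangles beaded along two strands — these can be realized, up to planar isotopy, as flypes along the boundary squares of the product region, which are themselves squares of $\mathcal{C}$. So up to planar isotopy, every non-trivial flype can be taken to occur along one of the squares in the characteristic collection $\mathcal{C}$. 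Since $\mathcal{C}$ consists of disjoint, non-parallel essential squares, and each such square crosses the diagram in four distinct edges out of $2n$ total, a simple combinatorial bound shows $|\mathcal{C}| \le n$ (in fact, each square separates the diagram and the characteristic squares form a nested/laminar family; one gets at most $n-1$ or $2n$ such squares depending on how carefully one counts — any linear bound in $n$ suffices).

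\medskip

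Next I would bound the number of flypes along a single fixed square $S \in \mathcal{C}$. A flype along $S$ rotates the enclosed tangle $T$ by $180^\circ$, moving a crossing from one side of $T$ to the other. The key point is that this operation is an involution up to planar isotopy when $T$ has a symmetry, and in general the set of distinct diagrams reachable by repeated flyping along $S$ alone corresponds to moving a single crossing around the "flyping circuit" associated to $S$. The number of positions this crossing can occupy along the circuit is at most the number of edges it could land on, which is bounded by the number of crossings of the tangle on that side, hence at most $n$. Thus we need at most $n$ flypes along any one square to reach any configuration reachable by flyping along that square.

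\medskip

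Finally I would assemble the bound: with at most $n$ squares in $\mathcal{C}$ (bounding $|\mathcal{C}|$ by $n$), and at most $n$ flypes needed along each square, and flypes along disjoint squares acting essentially independently (so they can be performed one circuit at a time without undoing earlier progress), the total number of flypes needed to transform $D_1$ into $D_2$ is at most $n \cdot n = n^2$; being slightly generous about double-counting or needing to account for adjacency of product regions and the two sides of a square gives the stated $2n^2$. The main obstacle I expect is making precise the claim that flypes along different squares of the characteristic collection do not interfere — i.e., that one can reorder and group a given flyping sequence so that all flypes along a fixed square are consecutive and bounded in number — which requires knowing that the characteristic collection is preserved (as an isotopy class of collections of squares) under flyping, an observation that follows from the uniqueness/naturality of the characteristic collection established via Lemma \ref{lemma} and the structure of flyping circuits from \cite{ST} and \cite{Lackenby}. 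The secondary technical point is the precise linear bound on $|\mathcal{C}|$ and on the number of flypes per circuit, both of which follow from counting edges/crossings but should be stated carefully.
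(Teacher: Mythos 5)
Your proposal takes essentially the same route as the paper: reduce to simple flypes along squares of the characteristic collection $\mathcal{C}$ (using Lemma \ref{lemma} and the invariance of $\mathcal{C}$ under flyping), bound $|\mathcal{C}|$ linearly in $n$, bound the number of moves per square by the number of crossings in its product region, and multiply. The two points you flag as imprecise are resolved in the paper exactly as you anticipate: $|\mathcal{C}|\le n$ follows because each square of $\mathcal{C}$ contains a crossing not lying in any of its proper subsquares, and the factor of $2$ comes from each square having a product region on each of its two sides (inside and outside), giving $n\cdot 2\cdot n=2n^2$ distinct simple flypes, after which any further flype must undo a previous one.
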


\begin{proof}
Denote the two diagrams given in the theorem by $D$ and $D'$. As noted above, by one of Tait flyping conjectures (proved in \cite{Kauf, Murasugi, Thi}), the crossing number of these diagrams is the same. Consider a characteristic collection $\mathcal{C}$ for $D$.

For every tangle $T$ that can be flyped, there is an essential square that encircles it.  From Lemma \ref{lemma} (1), any essential square that is not in $\mathcal{C}$ up to planar isotopy either encircles a single crossing or lies in a complementary product region. If we want to count all possible flypes, we are not interested in any squares that encircle a single crossing (call them \textit{trivial} squares), since they give rise to trivial flypes. A square that lies in a product region encircles a subset of squares of $\mathcal{C}$ and subtwists of the product region. Hence in $D$, we can see all possible tangles that we can flype (\textit{i.e.} all potential tangles $T$ in the definition of a flype): such a tangle is either encircled by a  square of $\mathcal{C}$ or by a non-trivial essential square in a product region. 
Moving subtwists along a product region either does not change the diagram, or is equivalent to moving some squares from the characteristic collection. Hence we may ignore non-trivial essential squares that do not encircle any squares of $\mathcal{C}$ in the product region. Therefore, we may assume that in the definition of a flype, the tangle $T$ consists of squares $c_1, ..., c_k$, with $k\geq 1$, of $\mathcal{C}$, that share a product region $P$, and possibly some subtwists from $P$.

If $k=1$ and there are no subtwists from $P$ in $T$, call a flype that involves the tangle $T$ \textit{simple}. Otherwise call it a \textit{concatenation} flype. A concatenation flype can be substituted by  a sequence of simple flypes. Hence, in what follows, we may consider only simple flypes, and count them for an upper bound. Suppose we have a sequence of diagrams $D=D_1, D_2, D_3, ..., D_m=D'$, where $D_{i+1}$ is obtained from $D_i$ by a simple flype. We need to bound $m$. We already established above that the tangles in $D_1$ to which we can apply simple non-trivial flypes are in bijective correspondence with the squares of $\mathcal{C}$.

Once a flype is applied to a diagram, some squares of $\mathcal{C}$ might move with the flype along their product regions, but otherwise the squares do not change, and a characteristic collection is still a characteristic collection after a flype, in the resulting diagram. The proof of this is straightforward and can be found in the first paragraph of Lemma 4 in \cite{Lackenby}. In particular, $|\mathcal{C}|=|\mathcal{C}'|$. Therefore, the tangles to which we can apply simple non-trivial flypes are in bijective correspondence with the squares of $\mathcal{C}$ not just in $D_1$, but also in any $D_i, i=1, ..., m$.

Now let's count possible flypes that take us from a diagram $D$ to a diagram $D'$ of $L$. Place $D$ in $\mathbb{R}^2$. Each square $c$ of $\mathcal{C}$ has more than one crossing, but $c$ might have some other squares of $\mathcal{C}$ inside it. If $c'$ is inside $c$, call $c'$ a \textit{subsquare} of $c$, and is these squares are not parallel, call $c'$ a \textit{proper subsquare} of $c$. We may assume that for the square $c$, there is at least one crossing $x$ such that $x$ is not contained in any of its proper subsquares that are in $\mathcal{C}$. Hence the initial diagram $D$, we have up to $n$ squares of $\mathcal{C}$. There are also up to $n$ product regions in $D$, since each such region should contain at least one essential square or crossing.

 A simple flype can move one of $n$ squares of $\mathcal{C}$, say $c$, along its product region. Note that $c$ might have one product region, say $P_1$, on one side of $c$ (say, inside the square $c$), and another product region, say $P_2$, on the other side of $c$ (say, outside of $c$). Each product region in a link diagram has up to $n$ crossings outside of its squares, and hence $c$ can be moved up to $n$ times by a simple flype. Hence, when we choose square $c$ of $\mathcal{C}$, we have $n$ options; when we choose inside/outside product region for $c$, we have 2 options ($P_1, P_2$); and we can move $c$ at most $n$ times in each product region. This yields $2n^2$ possible simple flypes, each potentially taking us to one of the diagrams in the sequence $D=D_1, D_2, D_3, ..., D_m=D'$.

If there are more flypes, they must reverse what has been done by previous flypes, since we ran out of new options. Hence $m \leq 2n^2$.
\end{proof}

As corollary of the theorem, the sequence of flypes can serve as a polynomial certificate, putting alternating link equivalence in complexity class NP. Note that to specify a flype, it is enough to specify four edges for the square that encircles the tangle $T$, and specify a crossing outside of $T$. So to specify  a sequence $P(n)=2n^2$ flypes, a sequence of $4P(n)$ edges and $P(n)$ crossings is needed. The NP upper bound on complexity will be improved to P in the next sections.

\section{Algorithm for \textsc{alternating link equivalence}}\label{algorithm}

\subsection{Basis for the algorithm} The purpose of this subsection is to gather observations about alternating diagrams that yield a proof that our algorithm works. Some of these observations might be not new: implicitly, some of them seem to appear in \cite{MT, ST, Lackenby}. However these previous papers address questions different from ours, and the exact statement of Lemma \ref{lemma2} is not there.

The following facts either follow from the definition of a flype or are explained in Proposition \ref{bound} proof. 
\begin{enumerate}

\item Characteristic collection $\mathcal{C}$ of $D$ is invariant under flypes: its squares may move, but no squares are removed or added to $\mathcal{C}$ when a flype is performed.

\item A 2-tangle $T$ that lies inside a square $c$ of $\mathcal{C}$ does not change beyond planar isotopy of the tangle when a flype moving $c$ in $D$ is performed. (Note that $T$ is rotated with respect to $D$ after the flype.)

\item Every non-trivial flype can be substituted by a finite sequence of non-trivial simple flypes. The tangles to which simple flypes can be applied are in bijective correspondence with the squares of $\mathcal{C}$ (the squares encircle the respective tangles). A simple flype moves a tangle and respective square one crossing forward or backward in its product region $P$.

\item In particular, flypes cannot exchange squares of $\mathcal{C}$ in a product region.

\end{enumerate}

We call a side of a square of $\mathcal{C}$ \textit{inner} if it has no proper subsquares from $\mathcal{C}$ on that side. If there is only one square in a characteristic collection, it has two inner sides. 

For a product region $P$, call crossings it contains that do not lie in squares of $\mathcal{C}$, \textit{free crossings of $P$}.

From now on, we will place link diagrams $D, D'$ in $\mathbb{R}^2$ for convenience. 

Consider an extra curve that encircles all of $D$. Even though such a curve does not intersect $D$, we call it an \textit{exceptional square}.

\begin{rem}\label{orient} Suppose we have two diagrams $D, D'$, with one essential square each. Suppose two tangles inside these squares are planar isotopic, and two tangles outside the squares are planar isotopic as well. This does not mean that $D$ and $D'$ are planar isotopic: for example, $D$ and $D'$ and the links they represent can be related by a mutation, where one of the tangles in $D$ is rotated to obtain $D'$. Therefore, it is important how we connect the tangles in $D$ and $D'$. We will address this by labelling free ends of 2-tangles as follows: choose one free end if a 2-tangle and label it N, then proceed with labels W, S, E for the tangle ends counterclockwise. Assume the tangle is encircled by a square $c$: we label the ends twice, inside and right outside of the square. If such a tangle $T$ is isotopic to yet unlabelled tangle $T'$, transfer the labels by the isotopy, and put them both inside and outside of the square encircling $T$. Now suppose $T_1, T_2$ are 2-tangles lying in the same product region $P$ of $D$, with $k$ free crossings of $P$ between them, and the same is true for $T_1', T_2', P', D', k'$. Suppose $T_1$ is planar isotopic to $T_1'$, and  $T_2$ to $T_2'$. If $k<k'$, we can perform $k'-k$ flypes on $T_2'$ in $P'$ to make the number of free crossings between $T_1'$ and $T_2'$ equal to $k$. The flypes will rotate $T_2'$ for $180(k'-k)$ degrees. This might affect the labels at free ends of $T_2'$, but it is simple to compare the labels of $T_2, T_2'$ taking this into account.  
\end{rem}

\begin{lemma}\label{lemma2} Suppose $C_{k+1}$ is either an exceptional square, or lies in a characteristic collection $\mathcal{C}$ for a diagram $D$. Consider inner squares $C_1, ..., C_k$ inside $C_{k+1}$. Take the fragment of $D$ that is all of $D$ inside $C_{k+1}$ and outside of $C_1, ..., C_k$. Denote this fragment by $F$. Then the following holds:
\begin{enumerate}

\item Either all crossings of $F$ belong to one product region $P$ that is a common product region for $C_1, ..., C_k$, or there is no product region in $F$.

\item Any simple non-trivial flype inside $C_{k+1}$ either moves one of $C_i, i=1, ..., k$, or happens fully within one of $C_i$.

\item Define $C_1', ..., C_{k+1}', F'$ for $D'$ in the same way we defined $C_1, ..., C_{k+1}, F$ for $D$. Suppose all crossings of $F, F'$ belong to product regions $P, P'$ respectively. Suppose further that the number of free crossings of $P, P'$ is the same, the tangles in the sequences $C_1, ..., C_k$ and $C_1', ..., C_k'$ are pairwise planar isotopic, the cyclic order of isotopic tangles coincides in the two sequences, and the positions of N, W, S, E labels in $F, F'$ outside of the squares encircling $C_1, ..., C_k$ and $C_1', ..., C_k'$ are the same after the respective rotation, as in Remark \ref{orient}. Then the fragment of $D$ encircled by $C_{k+1}$ is ambient isotopic (rel boundary, if it is a tangle) to the fragment of $D'$ encircled by $C_{k+1}'$.

\end{enumerate}
\end{lemma}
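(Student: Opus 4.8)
The plan is to establish (1), (2), (3) in order, relying on Lemma \ref{lemma} and on Lemma 8 of \cite{Lackenby}, which says that a product region consists of twists and tangles beaded on two strands. Throughout, we use the checkerboard coloring of $F$ relative to the blue squares $C_1,\dots,C_k$ (and $C_{k+1}$ when $C_{k+1}\in\mathcal{C}$), so the notion of product region from Section \ref{pre} applies to $F$. For (1): a product region in $F$ is, by definition, the union of all regions of a single color $X$, and its defining conditions force every color-$X$ region of $F$ to lie in one chain joined through crossings and blue squares. So if a product region $P$ exists for color $X$, it contains \emph{every} color-$X$ region of $F$; in particular it contains the two color-$X$ regions incident to any given crossing of $F$ (which sit in opposite quadrants) and the two incident to any $C_i$ (which sit on opposite boundary arcs, since the four regions around an essential square alternate in color). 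It follows that every crossing of $F$ lies in $P$ as a free crossing and that $P$ passes through each $C_i$ as a bead; that is, $P$ is a common product region for $C_1,\dots,C_k$ containing all crossings of $F$. If neither color yields a product region, $F$ has none; if both do, either serves. (Primality of $D$ is used to discard degenerate local configurations, such as two boundary arcs of some $C_i$ bounding the same region.) This is the dichotomy.

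For (2): by the list of facts preceding Remark \ref{orient}, a simple nontrivial flype inside $C_{k+1}$ is carried by a square $c\in\mathcal{C}$ lying inside $C_{k+1}$, slid across a single free crossing $x$ of the product region governing it, with $x$ also inside $C_{k+1}$. Cutting $D$ along $\mathcal{C}$ separates the inside of each $C_i$ from everything else, so no product region --- being a union of checkerboard regions of the resulting pieces --- crosses a square of $\mathcal{C}$. Hence if $x$ lies inside some $C_i$, the governing product region, and with it $c$ and the entire flype, lies inside $C_i$. Otherwise $x$ is a crossing of $F$, so by (1) it is a free crossing of the product region $P$ of $F$, which beads precisely through $C_1,\dots,C_k$; as the flype is simple, $c$ encircles a single bead of $P$. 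That bead is one of the $C_i$: it cannot lie inside a $C_i$ (then no checkerboard region of $F$ would touch it), and it is not $C_{k+1}$ (a flype carried by $C_{k+1}$ through $P$ is a concatenation flype when $k\ge1$, and for $k=0$ there is no square of $\mathcal{C}$ inside $C_{k+1}$ at all). So the flype moves one of $C_1,\dots,C_k$ along $P$.

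For (3): by (1), $F$ (resp.\ $F'$) equals its product region $P$ (resp.\ $P'$), which by Lemma 8 of \cite{Lackenby} is a necklace of twist-beads and tangle-slots $C_1,\dots,C_k$ on two strands, closed up when $C_{k+1}$ is exceptional and otherwise ending at the four points of $C_{k+1}$. Plug the tangles of the $C_i$ into the slots to form the fragment $G$ encircled by $C_{k+1}$, and similarly $G'$. The crux is that flyping a tangle-bead lying in the interior of $G$ across an adjacent free crossing is realized by an ambient isotopy of $G$ rel $\partial G$ --- one flips that bead through a ball disjoint from $C_{k+1}$ --- and this isotopy rotates the bead by $180^\circ$, exactly the rotation recorded by the N, W, S, E labels of Remark \ref{orient}. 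Using such flypes, push all free crossings of $P$ into one fixed gap between two consecutive beads, obtaining a normal form for $G$; do the same for $G'$. In normal form, $G$ and $G'$ carry the same cyclic sequence of tangle-beads (the hypothesis supplies the matched, pairwise planar-isotopic tangles in the matched order), the same single block of $m=m'$ free crossings, and --- after accounting for the rotations --- the same N, W, S, E labels, so the tangles are glued in the same way, which excludes a mutation (cf.\ Remark \ref{orient}); the alternating condition fixes whatever data remains, such as the sign of the free crossings. The two normal forms are then related by an evident ambient isotopy rel $\partial G$: match $C_{k+1}$ with $C_{k+1}'$, match the $m$-crossing twists, and match each tangle-bead with its partner using the ambient isotopy rel boundary furnished by the planar isotopy between the plugged tangles (planar isotopy implies ambient isotopy rel boundary, as recalled in Section \ref{pre}). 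Composing with the two normalization isotopies yields the desired rel-boundary ambient isotopy between the fragments encircled by $C_{k+1}$ and $C_{k+1}'$; when $C_{k+1}$ is exceptional there is no boundary, and the same argument gives an ordinary ambient isotopy.

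The step I expect to be the main obstacle is part (1): one must combine the precise definition of a product region with Lemma \ref{lemma} and Lackenby's Lemma 8 to show at once that any product region of $F$ absorbs all crossings of $F$ and contains each $C_i$ as a bead, while correctly discarding degenerate local configurations via primality of $D$. A second delicate point, in part (3), is the bookkeeping that reconciles the $180^\circ$ rotations produced by the normalizing flypes with the N, W, S, E labels, so that the hypothesis ``the labels agree after the respective rotation'' is exactly what is needed to run the final matching isotopy and to block a mutated gluing.
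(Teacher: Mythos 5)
Your proof is correct and follows essentially the same route as the paper's: part (1) from the definition of a product region together with Lemma \ref{lemma}(1), part (2) from the facts about simple flypes recorded before Remark \ref{orient}, and part (3) from the Tait flyping theorem by sliding tangle-beads along the product region. Your part (3) is somewhat more explicit than the paper's --- you actually construct the rel-boundary ambient isotopy via a normal form that collects the free crossings into one gap, whereas the paper only observes that simple flypes preserve the weight, the tangles, and their cyclic order --- but the underlying argument is the same.
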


Note that number $k$ for $P$ is called  \textit{weight} in \cite{ST}.

\begin{proof}
Item (1) follows from the definition of product region, definition of $F$, and Lemma \ref{lemma}(1). In particular, any product region is a sequence of twists. If there is a product region $P$ inside $C_{k+1}$, and a crossing inside $C_{k+1}$ that does not belong to $P$, then some part of $P$ is encircled by an essential square $c$. By Lemma \ref{lemma}(1), $c$ is then parallel to the square of $\mathcal{C}$ that fully lies in $C_{k+1}$. Then $P$ is not in $F$ by the definition of $F$.  

Item (2) follows from Lemma \ref{lemma}(2), item (3) in the discussion above the lemma, and (1) of this lemma. In particular, flypes only happen in product regions by item (3) in the discussion above the lemma. Two adjacent product regions are never compatible by Lemma \ref{lemma}(2). If there are two distinct product regions inside $C_{k+1}$, then one of them must lie inside $C_i, i=1,..., k$ due to (1) of this lemma. 

 Item (3) follows from items (2)-(4) in the discussion above this lemma, the definition of the product region, and the Tait flyping theorem. Indeed, ambient isotopy for our diagrams is equivalent to planar isotopy together with flypes. The only way to change $P$ by flypes is to apply some combination of simple flypes. Every simple flype moves a tangle bounded by $C_i$ forward or backward for one crossing in $P$ by (3) in the discussion above this lemma. In the process, the number of free crossings of $P$ stays the same, as do the tangles bounded by $C_1, ..., C_k$, and their cyclic order due to items (2) and (4) in the discussion above.
\end{proof}

\begin{rem}\label{comparegraphs} For two link diagrams $D, D'$, put a vertex at every crossing, and make diagram edges into edges between vertices. We obtain two planar pseudographs  $\Gamma, \Gamma'$, embedded in plane. Planar isotopy of $D, D'$ is equivalent to plane isotopy of $\Gamma, \Gamma'$ (such an isotopy is discussed in Subsection \ref{graphs}) together with the equivalence of crossing information for $D, D'$. Note that for alternating link diagrams, it is enough to compare over/under information for just one crossing of $D$ with such information for the respective crossing of $D'$: this determines similar information for the other crossings. If instead of all of $D, D'$ we have two tangles $T, T'$ encircled by squares $c, c'$, we can form the graphs similarly, but need to put an extra vertex at every free end of each tangle. We put such vertices on the intersection of $T$ and $c$ for convenience. See Figure \ref{tangle}. Planar isotopy of $T, T'$ is still equivalent to plane isotopy of the respective plane pseudographs together with the equivalence of crossing information for $T, T'$. 
\end{rem}

\begin{figure}
    \centering
    \begin{subfigure}[b]{0.49\textwidth}
              \centering
        \includegraphics[scale=0.9]{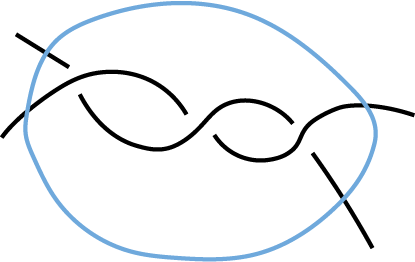}
        \subcaption{}
        \label{Tangle1}
    \end{subfigure}
       \begin{subfigure}[b]{0.49\textwidth}
        \centering
        \includegraphics[scale=0.9]{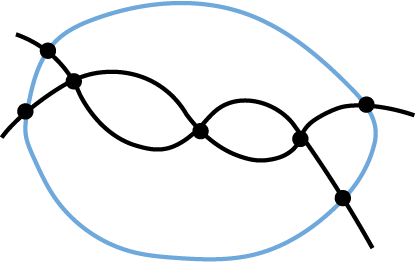}
       \subcaption{}
        \label{Tangle2}
    \end{subfigure}
    \caption{On the left, a 2-tangle (in black) encircled by a square (in blue). On the right, the respective pseudograph (in black).}
    \label{tangle}
\end{figure}

\subsection{Algorithm and its justification.}  In this subsection, we outline the algorithm, and prove that it works exactly as expected in Theorem \ref{algworks}.

\begin{alg}\label{alg}

\

\textit{Input:} two connected prime alternating reduced diagrams $D_1, D_2$ with crossing numbers $n_1, n_2$.

\textit{Output:} Yes or No, answering the question whether $D_1, D_2$ belong to the same link.

\vspace{-0.1cm}
\noindent\makebox[\linewidth]{\rule{6.5in}{0.4pt}}

\textit{Step 1.} If $n_1\neq n_2$, stop with output No.

\vspace{-0.1cm}
\noindent\makebox[\linewidth]{\rule{6.5in}{0.4pt}}

\textit{Step 2.} Detect a characteristic collection of squares $\mathcal{C}_i$ in $D_i, i=1, 2$. 

\vspace{-0.1cm}
\noindent\makebox[\linewidth]{\rule{6.5in}{0.4pt}}

\textit{Step 3.} Determine all inner sides for squares of $\mathcal{C}_i, i=1, 2$, and label them with $\theta_1$. Then label the squares that contain the already labelled squares but not other squares by $\theta_2$ on the side where the $\theta_1$ squares are. Continue inductively until all sides of all squares are labelled this way by the labels from the finite list $\theta_1, \theta_2, \theta_3, ...$. (Note: we might have a square labelled by $\theta_1$ on both sides, and then it is the only square in $D_i$.) See Figure \ref{link}(A) for an example of such labelling.

\vspace{-0.1cm}
\noindent\makebox[\linewidth]{\rule{6.5in}{0.4pt}}

\textit{Step 4a.}  Let $j=\theta_1$, and consider all squares labelled with $j$ at least on one side. For $j=\theta_1$, these are inner sides. On the inner side, there is a 2-tangle. Take a pair of such tangles $T_1, T_2$ encircled by squares $c_1, c_2$ in diagrams $D_1, D_2$ respectively.

\textit{Comparing two 2-tangles on inner sides of squares.} Make a pseudograph $\Gamma_i$ out of $T_i$ as in Remark \ref{comparegraphs}. Label each free end of $T_1$ by N, W, S, E, starting with N on some end, and continuing counterclockwise. Then compare the resulting plane pseudographs $\Gamma_i, i=1,2,$ up to plane isotopy. If isotopic, compare crossing information for one pair of crossings which yield equivalent vertices in $\Gamma_1, \Gamma_2$, \textit{i.e.} check if underpasses in $T_1$ correspond to underpasses in $T_2$. If the crossing information coincides, color the interior of a topological disk on the inner sides of $c_1, c_2$ in the same color. Label the free ends of $T_2$ by N, W, S, E according to the isotopy with $T_1$.

Once we compare all squares on the inner sides as outlined in the above paragraph, we have interior of every square in $D_1, D_2$ on the inner side colored. Make interior of each such square, say $c$ (together with the tangle the square encircles, say $T$) into a colored vertex $v$, otherwise leaving the diagram $D_i, i=1, 2$, untouched. Preserve the N, W, S, E labels of $T$, moving them from the ends of $T$ to the pseudograph edges incident to $v$, placing them near $v$ at those pseudograph edges (each pseudograph edge might get at most two labels this way, near each of its vertices). Denote the resulting hybrid of a colored and labelled pseudograph and a link diagram by $D'_i, i=1, 2,$ respectively. 

If at this point number of vertices of some color in $D_1$ is not the same as in $D_2$, stop with output No.

\vspace{-0.1cm}
\noindent\makebox[\linewidth]{\rule{6.5in}{0.4pt}}

\textit{Step 4b.} Repeat the following until we exhaust all values from $\theta_2, \theta_3, ...$ for $j$.

 Change $j$ to the next label in the list $\theta_2, \theta_3, ...$. Consider a square $c_i, i=1,2,$ in $D_i, i=1,2,$ that has a side labelled by $j$, and refer to this side as "inside" of $c$. By now, there are subsquares inside $c_i$ with colored interior, and with labels N, W, S, E each. 

\textit{Comparing two 2-tangles on $j$-side of squares.} If there is a product region $P_i$ inside one of $c_i$ but not another, the tangles are not equivalent. If there are product regions $P_i$ in each of $c_i, i=1,2$, compare the number of free vertices of $P_i$, and the lists of colored vertices inside $c_i$'s, up to cyclic order. If these numbers and lists are equal, also compare  N, S, W, E labels for vertices of the same color inside $c_i$ as in Remark \ref{orient}, and the crossing information (over/underpasses) for one pair of free crossings from $P_1, P_2$. If all coincides, $c_1$ and $c_2$ are considered equivalent. Lastly, if none of the $c_i$ contains a product region, make colored plane pseudographs out of 2-tangles inside $c_i$ as in Remark \ref{comparegraphs}, keeping the N, W, S, E labels as above. Compare these colored pseudographs up to plane isotopy preserving colors and labels. 

Consider all squares and their sides with label $j$ in $D_1$ and $D_2$. Compare each such square in $D_1$ with each such square in $D_2$ as outlined in the above paragraph. Give the interior of equivalent squares, on the side labelled by $j$, the same color, and labels N, W, S, E for their free ends as above. This new coloring absorbs all old colors and labels from the previous steps (for previous $j$) inside the squares we consider. Make these newly colored squares into new colored vertices, keeping N, W, S, E labels for them as above. This changes $D'_i, i=1, 2$. 

If at this point the number of vertices of some color in $D_1$ is not the same as in $D_2$, stop with output No.

Step 4 is partially illustrated in Figure \ref{link}(B) for $j=i_2$ for one of the diagrams, say $D_1$. 

\vspace{-0.1cm}
\noindent\makebox[\linewidth]{\rule{6.5in}{0.4pt}}

\textit{Step 5.} At this step, we have no more squares. We add an exceptional square to each of $D_1, D_2$ and use the comparison procedure from Step 4b, giving Yes or No as output. 


\end{alg}

\begin{figure}
    \centering
    \begin{subfigure}[b]{0.49\textwidth}
              \centering
        \includegraphics[scale=0.7]{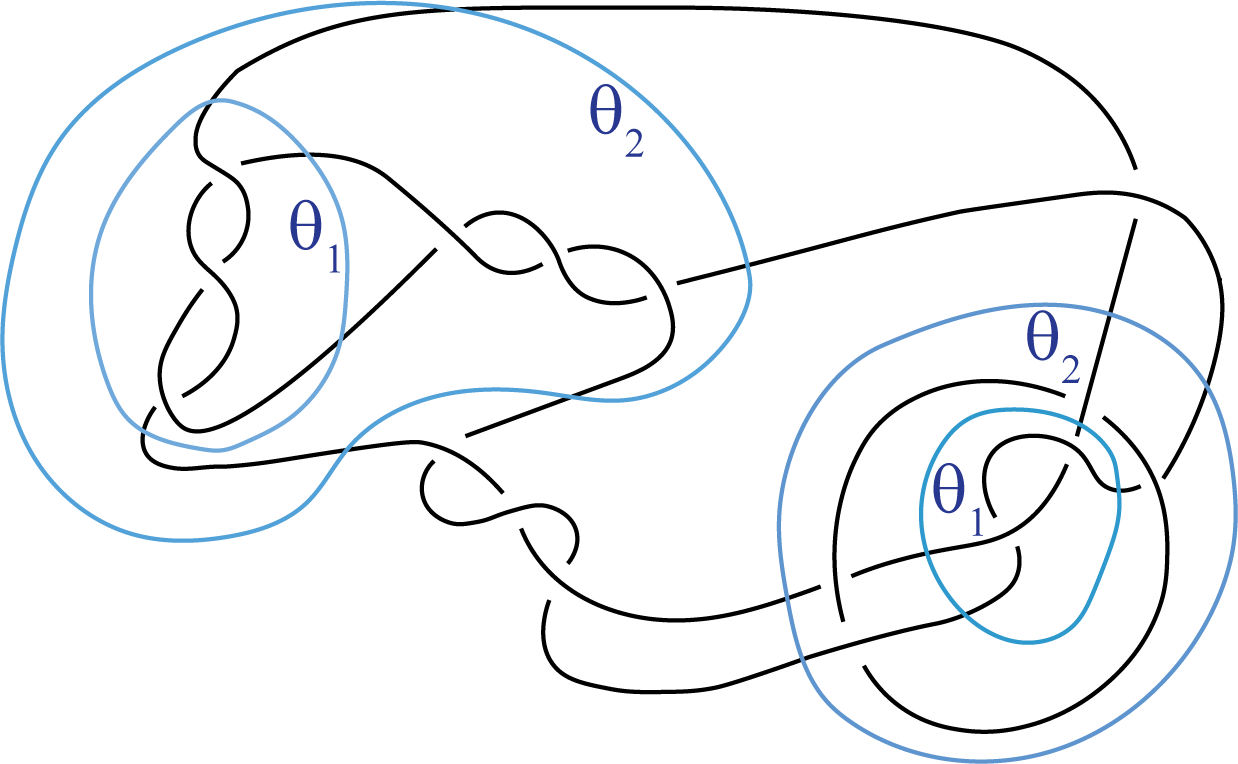}
        \subcaption{}
        \label{Link1}
    \end{subfigure}
       \begin{subfigure}[b]{0.49\textwidth}
        \centering
        \includegraphics[scale=0.7]{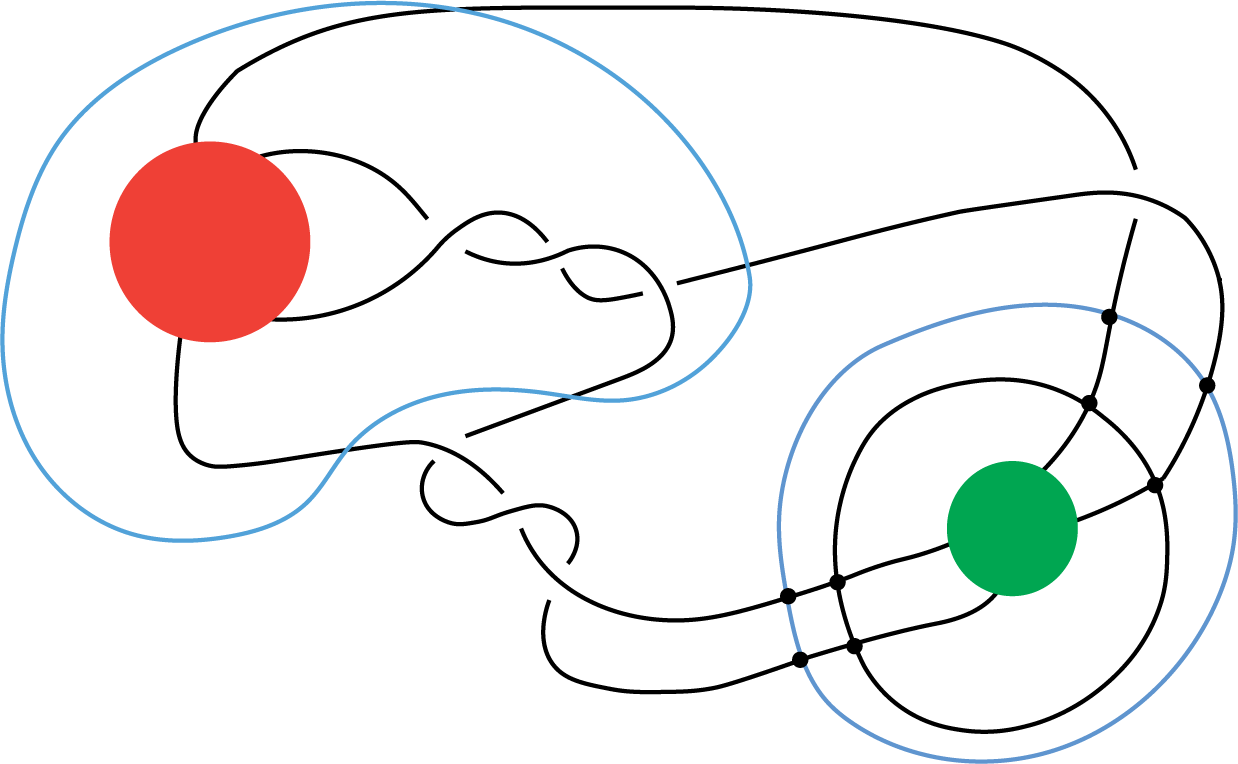}
       \subcaption{}
        \label{Link2}
    \end{subfigure}
    \caption{In Figure \ref{link}(a), there is an example of a link diagram \(D_1\) (in black) with a characteristic collection $\mathcal{C}_1$ of squares (in blue), where sides of squares are labeled by $\theta_1, \theta_2$. Substitute 2-tangles on the sides of squares of $\mathcal{C}_1$ labeled by $\theta_1$ with colored graph vertices (red and green), as in Step 4a. We obtain $D_1'$. Consider then the sides of squares labelled by $\theta_2$, considering them "interiors" of the squares for now. We see a product region inside the square on the left, and no product region inside the square on the right. We then turn the left square into a plane colored pseudograph. This changes $D_1'$: we obtain the one in Figure \ref{link}(b). All this is a part of Step 4b.}
    \label{link}
\end{figure}

\begin{thm}\label{algworks} Algorithm \ref{alg} gives $Yes$ if and only if $D_1, D_2$ represent the same link, up to ambient isotopy.
\end{thm}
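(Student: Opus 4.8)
To prove Theorem \ref{algworks}, the plan is to establish the two implications by induction on the square hierarchy $\theta_1,\theta_2,\dots$ built in Step 3, the inductive unit being a single fragment comparison. First note that Step 1 is correct: by the Kauffman--Murasugi--Thistlethwaite theorem a reduced alternating diagram realizes the crossing number, so diagrams of the same link have equal crossing numbers. Second, by the Tait flyping theorem of Menasco and Thistlethwaite, two connected prime alternating reduced diagrams represent the same link if and only if they are related by a finite sequence of planar isotopies and flypes; and by fact (1) of the list preceding Lemma \ref{lemma2}, together with the fact that a characteristic square is defined up to planar isotopy (hence canonically), such a sequence carries a characteristic collection of $D_1$ to one of $D_2$, preserving the subsquare relation, the product regions, the inner sides, and hence the labels $\theta_i$. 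So computing $\mathcal{C}_1,\mathcal{C}_2$ and their hierarchies separately and comparing them is legitimate.

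The core of the argument is the following \textbf{matching claim}, proved by induction on $j\in\{\theta_1,\theta_2,\dots\}$: for every square $c_1$ of $\mathcal{C}_1$ (or the exceptional square) with a side labelled $j$ and every such $c_2$ in $D_2$, the Step 4 comparison assigns their $j$-side interiors the same colour if and only if the corresponding fragments of $D_1,D_2$, together with their $N,W,S,E$ boundary labels, are ambient isotopic rel boundary. The base case $j=\theta_1$ is immediate: by Lemma \ref{lemma2}(2) with $k=0$ an inner side admits no nontrivial simple flype, so ambient isotopy rel boundary of the enclosed tangle is just planar isotopy, which by Remark \ref{comparegraphs} is exactly detected by the Step 4a test (plane-pseudograph isotopy plus agreement of the crossing information at one vertex, which propagates to all vertices since the tangle is alternating). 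For the inductive step, assume the claim for all labels below $j$; then by the inductive hypothesis two coloured subvertices inside $c_i$ have the same colour precisely when the subfragments they came from are ambient isotopic rel boundary with matching labels. Apply Lemma \ref{lemma2}(1): the fragment $F$ inside $c_i$ either has all remaining crossings in one common product region $P_i$ of the inner squares, or has none. If there is none, Lemma \ref{lemma2}(2) forbids any nontrivial flype of $F$ outside a subsquare, so ambient isotopy rel boundary of $F$ is planar isotopy up to flypes internal to subsquares --- all already absorbed into the colours --- and is thus detected by the coloured plane-pseudograph comparison of Step 4b. If there is a product region, the forward implication (test passes $\Rightarrow$ isotopic) is Lemma \ref{lemma2}(3), using the inductive hypothesis to translate ``equal colour'' into ``isotopic subfragment with matching labels''; the backward implication holds because, by the flyping theorem in the tangle form used in Lemma \ref{lemma2} together with Lemma \ref{lemma2}(2), every ambient isotopy rel boundary of $F$ is realized by planar isotopies and simple flypes that either slide a subsquare one free crossing along $P_i$ or occur inside a subsquare, and, exactly as in the analysis in the proof of Lemma \ref{lemma2}(3), each such move preserves all the data checked in Step 4b: the presence of a product region, the number of free crossings of $P_i$, the cyclic list of colours of the inner squares, the isotopy class rel boundary (hence, by the inductive hypothesis, the colour) of each subfragment, and --- after the $180^\circ$-rotation bookkeeping of Remark \ref{orient} --- the $N,W,S,E$ labels and the over/under information at one free crossing. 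This proves the matching claim.

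Next I would verify that the early ``No'' outputs are correct, so that the algorithm never falsely rejects. If $D_1,D_2$ represent the same link, a flype sequence realizing this induces, by facts (1) and (4) of the list preceding Lemma \ref{lemma2}, a bijection $\phi:\mathcal{C}_1\to\mathcal{C}_2$ preserving the subsquare relation, the product regions, the inner sides and the $\theta$-labels, and carrying the fragment inside each $c$ (via planar isotopies and flypes) to the fragment inside $\phi(c)$; hence these two fragments are ambient isotopic rel boundary with matching boundary labels. By the matching claim, $\phi$ is colour-preserving at every level, so at no stage do the colour multisets of $D_1,D_2$ differ and no count test fires. Thus a ``No'' coming from Step 1 or from a count test is never produced when the links agree. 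Finally, Step 5 adds the exceptional square to each diagram and runs the Step 4b comparison, so the fragments being compared are all of $D_1$ and all of $D_2$ (Lemma \ref{lemma2} explicitly allows $C_{k+1}$ to be the exceptional square); by the matching claim at this top level the algorithm outputs Yes if and only if $D_1$ and $D_2$ are ambient isotopic, which for closed diagrams is the same as representing the same link. Combined with the correctness of Step 1 and of the count tests, this yields the theorem.

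I expect the main obstacle to be the backward direction of the inductive step --- certifying that ambient isotopy rel boundary of the alternating tangle fragments is genuinely captured by the finite list of invariants compared in Step 4b. This is where one must lean hardest on Lemma \ref{lemma2}(2), to guarantee that flypes are organized by the hierarchy so that the induction closes, on Remark \ref{orient}, to control how the $N,W,S,E$ labels transform under the rotations that flypes induce on moved subtangles (without which isotopic interiors and exteriors would not force isotopy of the whole, as the mutation example there shows), and on the completeness of the chosen invariants in each of the two cases of Lemma \ref{lemma2}(1). A secondary point requiring care is the canonicity of the characteristic collection, its product regions and the labels $\theta_i$, which underlies computing them separately in $D_1$ and $D_2$; this rests on characteristic squares being defined up to planar isotopy and being flype-invariant.
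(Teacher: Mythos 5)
Your proposal is correct and follows essentially the same route as the paper's proof: induction on the hierarchy of labels $\theta_1,\theta_2,\dots$, using the Tait flyping theorem, Lemma \ref{lemma2}(1)--(3), Remark \ref{orient} for the $N,W,S,E$ labels, and Remark \ref{comparegraphs} for the pseudograph comparisons. Your write-up is somewhat more explicit than the paper's about the completeness direction of the inductive step and about why the early ``No'' count tests never fire for equivalent links, but these are refinements of the same argument rather than a different one.
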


\begin{proof} As noted above, by one of Tait flyping conjectures (proved in \cite{Kauf, Murasugi, Thi}), the crossing number of diagrams must be the same. This explains the stop and output No in Step 1. 

Step 2 and 3 proceed without any potential stops. 

By Tait flyping theorem, $D_1$ and $D_2$ are ambient isotopic if and only if they can be related by planar isotopy and a sequence of flypes.

\textbf{Claim 1} We can work on the diagram in steps for different $j$, \textit{i.e.} pairwise isotopy, planar or ambient rel boundary, of pairs of tangles encircled by squares of $\mathcal{C}_1, \mathcal{C}_2$ (where in each pair, one tangle is from $D_1$, and one from $D_2$, and such pairs exhaust all of $D_1, D_2$ and $\mathcal{C}_1, \mathcal{C}_2$), is equivalent to ambient isotopy of links represented by $D_1, D_2$.

\textit{Proof of Claim 1} Flypes do not move anything between interiors of different squares by Lemma \ref{lemma2}(2), and neither does planar isotopy. Hence, $D_1$ and $D_2$ are related by a sequence of flypes and planar isotopy if and only if the tangles $D_i$ encircled by squares on $j$-side are related by flypes and planar isotopy, as well as are, eventually, the complements of all square interiors in $D_i$.

To justify steps 4-5, we proceed inductively on $j=\theta_1, \theta_2, ...$., applying Lemma \ref{lemma2}(3) to squares of $\mathcal{C}_1, \mathcal{C}_2$ with sides labelled by the same $j$.  In particular, for two such squares $c_1, c_2$, let $C_{k+1}=c_1, C'_{k+1}=c_2$ in the lemma, and treat their sides labelled by $j$ as "inside". Then let the subsquares inside them, colored on previous step (for $j-1$) be $C_1, ..., C_k$ and $C'_1, ..., C'_k$ respectively to apply Lemma \ref{lemma2} (3) statement.  

By item (1) of the lemma, either $F, F'$ do not contain products regions, or all crossings of $F$ ($F'$) belong to one product region that is a common product region for $C_1, ..., C_k$ ($C_1, ..., C_k$). Recall that flypes can only happen in product regions. 

If there are no product regions in $F, F'$, then isotopy of plane pseudographs that respects crossing information is equivalent to planar isotopy of the respective 2-tangles by Remark \ref{comparegraphs}, and to ambient isotopy rel boundary of these tangles, since no flypes are possible. If one of $F, F'$ contains a product region, and one does not, the fragments of $D_i$ encircled by $c_i$ are not planar isotopic and the latter $c_i$ does not allow any flypes. Hence, by Tait flyping theorem, these cannot represent the same 2-tangles up to an ambient isotopy rel boundary.  If each of $F, F'$ contains a product region, we use Lemma \ref{lemma2} (1) to conclude that all of $F, F'$ is the product region, Lemma \ref{lemma2} (3) to compare the tangles encircled by $c_1, c_2$ and Remark \ref{orient} to compare N, W, S, E labels. Note that from working with previous $j$, we already know whether tangles in $C_1, ..., C_k$ and $C'_1, ..., C'_k$ are pairwise ambient isotopic rel boundary, and the isotopic ones were colored in the same color and have $N, W, S, E$ labels according to their isotopy. \end{proof}

\begin{rem}\label{gen} So far we considered connected prime alternating reduced diagrams, and hence only non-split prime alternating links. Once there is a polynomial algorithm to compare non-split prime alternating links with reduced diagrams, one can compare any two alternating links given by their alternating diagrams in polynomial time. This is based on the following facts, known since 1980's: (i) if an alternating diagram is not reduced, one can remove nugatory crossings by a 180-degree twist in ambient 3-space, and there are at most $n$ nugatory crossings; (ii) Menasco \cite{Menasco1} proved that connected alternating diagrams represent non-split links, therefore a separating 2-sphere in a link complement corresponds to a separating curve in the link projection; (iii) Menasco also proved that connected prime alternating diagrams represent prime links, therefore the essential 2-punctured sphere in the definition of a composite link corresponds to a curve that intersects link projection twice, and has knotted arcs on both sides. Hence, there is a rapid diagram-based algorithm that makes an alternating diagram reduced, and decomposes a split composite alternating link represented by a reduced alternating diagram into connected prime parts. While \cite{Samperton} concerns a very different topic from what we consider here, its subsection 2.1 gives an algorithmic version of the statements about reducing diagrams, and about passing from non-prime to prime diagrams. 
\end{rem}

\section{Algorithmic complexity}\label{Poly}

\begin{thm} The problem \textsc{alternating link equivalence} is in class P if the input is two alternating diagrams. In particular,  Algorithm \ref{alg} has a polynomial number of steps in terms of $\text{max}\{n_1, n_2\}$. 
\end{thm}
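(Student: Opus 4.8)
The plan is to walk through Algorithm \ref{alg} step by step and exhibit a polynomial bound on the number of elementary operations in each; since there are finitely many steps, their sum is polynomial. Throughout set $n:=\max\{n_1,n_2\}$. Reading the two diagrams and converting them to the plane pseudographs $\Gamma,\Gamma'$ of Remark \ref{comparegraphs} is linear in $n$, so from now on I work with these combinatorial objects. Step 1 is a single comparison of integers. Steps 3 and 5 are cheap once Step 2 is done: by Proposition \ref{bound} (and Lemma \ref{lemma}(3)) each characteristic collection $\mathcal{C}_i$ has at most $n$ squares, so in Step 3 computing their nesting forest and propagating the labels $\theta_1,\theta_2,\ldots$ outward is a traversal of that forest, $O(n^2)$ per diagram; Step 5 is one additional invocation of the Step 4b comparison procedure applied to the exceptional square, which I bound below.

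The substance is in Steps 2 and 4. For Step 2 I would enumerate candidate essential squares combinatorially: such a square is determined up to isotopy by the four distinct edges of $D$ it meets, their cyclic order along the curve, and, between consecutive crossing points, the diagram region the curve passes through (each region being a disk, the connecting arc is then unique up to isotopy). Since $D$ has $2n$ edges and each edge bounds two regions, this yields $O(n^4)$ candidates, enumerable in polynomial time; discard those whose arcs do not close up into a simple curve or that are not essential. For each surviving candidate I check, in polynomial time: whether it encloses a single crossing (immediate from the combinatorics); whether it can be isotoped off a previously kept square (a plane-graph isotopy test on $O(n)$ vertices, as in Subsection \ref{graphs}); and, where relevant, whether its complementary regions form a product region and whether adjacent product regions have incompatible structures — each a local verification of the twist structure together with the color-valence and adjacency conditions in the definition of a product region. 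Feeding this data into Lemma \ref{lemma} then extracts a characteristic collection. I expect this to be the main obstacle: the individual tests are routine planarity and graph computations, but organizing the enumeration of essential squares so that it is provably polynomial (rather than merely finite), and verifying the three conditions of Lemma \ref{lemma} efficiently, is where care is needed.

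For Step 4, observe that each $\mathcal{C}_i$ produces at most $n$ distinct labels $j\in\{\theta_1,\theta_2,\ldots\}$, and at each label at most $n$ squares of $\mathcal{C}_i$ carry a $j$-side, so there are $O(n^2)$ pairs of squares to compare at a given level. Each pairwise comparison costs polynomially many steps: in the case with no product region it is a colored plane pseudograph isotopy test on a graph with $O(n)$ vertices and edges, in P by the discussion in Subsection \ref{graphs} (e.g.\ $O(n^2)$); in the product-region case it is a comparison of the number of free crossings, of the cyclic list of colored vertices, of their N, W, S, E labels (including the $O(n)$ flype-rotation bookkeeping of Remark \ref{orient}), and of one crossing's over/under information — all $O(n^2)$. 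The number of colors introduced at each level is at most $n$, so the relabelling never blows up, and the "same number of vertices of each color" checks are $O(n)$ counting operations. Hence each of the $O(n)$ levels costs $O(n^4)$, so Step 4 is $O(n^5)$.

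Summing the contributions of Steps 1--5 gives a bound polynomial in $n$ (on the order of $n^5$ or $n^6$), proving that Algorithm \ref{alg}, and hence \textsc{alternating link equivalence} for inputs given by connected prime alternating reduced diagrams, is in P. By Remark \ref{gen} the preprocessing that reduces arbitrary alternating diagrams to this case costs only a further polynomial factor, which yields the theorem as stated for arbitrary alternating diagrams.
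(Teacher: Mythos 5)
Your proposal is correct and follows essentially the same route as the paper: enumerate candidate essential squares via $O(n^4)$ quadruples of edges, filter down to a characteristic collection, and then bound Step 4 by reducing each tangle comparison to colored plane pseudograph isotopy with bounded valence or to comparison of cyclic color lists, with Remark \ref{gen} handling general alternating diagrams. The only notable difference is in Step 2's filtering, where the paper uses the simpler pairwise test (do two essential squares intersect in a way that traps a crossing, i.e.\ can they be isotoped off each other) rather than verifying all three conditions of Lemma \ref{lemma}; both are polynomial, so this does not affect the conclusion.
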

\begin{proof} In the view of Remark \ref{gen}, we only need to prove the second part of the theorem statement.

Complexities of steps 1-5 need to be added. If $n_1\neq n_2$, the algorithm stops at Step 1. So we may assume that $n_1=n_2=n$.

Step 1 is comparison of two natural numbers.

Step 2 can be subdivided in two substeps, each with polynomial complexity, and their complexities need to be added. 

First, detect all essential squares. For this, take all edges of $D_i, i=1, 2$. There are $2n$ of them for each diagram. Then take all (unordered) combinations of 4 edges out of them: there are  ${2n \choose 4}=O(n^4)$ of them. For each quartet of edges, check if the curve intersecting them is connected. If it is, it is a square. Check if there is more than one crossing inside and outside this square. If both hold, then the square essential. Therefore, all essential squares can be detected in polynomial number of steps in terms of $n$, and there are at most $O(n^4)$ of them.

Second, choose characteristic squares out of the essential squares. For this, for each pair of essential squares (there are at most $O(n^8)$ such pairs) check if they intersect as curves, and if the intersection contains a crossing. If this holds for a pair, the squares are not characteristic. If this does not hold for every pair, then it is a characteristic square. All together, this takes polynomial number of steps in terms of $n$.

Note that there are at most $n$ characteristic squares. Indeed, such a square $c$ must contain at least one crossing that is not contained in its proper subsquares of $\mathcal{C}_i$.

In Step 3, for each square of the characteristic collections $\mathcal{C}_i, i=1, 2$, check if it has a subsquare: e.g., by checking if a square shares all crossings with another square. For each of at most $4n^2$ pairs of sides of squares, this requires checking at most $n$ crossings, and hence yields at most $4n^3$ steps. This way we detect and label all $i_1$ sides of squares of $\mathcal{C}_i, i=1, 2$. Then for each $i_1$ square, consider its non-labelled side, and check whether it shares crossings with any other non-labeled side of another square. Again there are up to $4n^2$ pairs of sides of squares, and up to $n$ crossings for each pair to check. We have at most $4n^3+4n^3$ steps to detect $i_1$ and $i_2$ squares. Continue at most $n$ times, get at most $n$ by $4n^3$ steps to label all sides of squares by $i_1, i_2, i_3, .... i_k, k\leq n$.

Step 4 is repeated until every two tangles encircled by the squares of $\mathcal{C}$ are compared, where a tangle may lie on one of two sides of a square. So we have at most $2n$ times $2n$ pairs of tangles or product regions to compare. When we create a pseudograph from a 2-tangle, we do this in polynomial number of steps. In the resulting pseudographs, there are no loops and there are at most two double edges between two vertices. Due to properties of a link diagram, there are at most $n$ pseudograph vertices (or $2n$ for a subdivision graph), the valence of each vertex is at most $4$, there are up to $2n$ edges for pseudograph (or $3n$ for a subdivision graph). The total number of colors is at most $n$, since we have at most $n-1$ distinct tangles, and can add one more color that is used for any pseudograph vertex that has not been obtained from a colored tangle.

In Step 4a, for $j=i_1$, the tangle comparison is polynomially reduced to \textsc{plane pseudograph isotopy with bounded valence}, which has polynomial complexity as explained in subsection \ref{graphs}.

In Step 4b, for $j=i_2, ..., i_k$, the comparison for each pair of product regions is reduced in some cases to comparison of two ordered lists of colors, comparison of two numbers (\textit{i.e.} the weights of product regions), and comparing N, W, S, E labels for each of at most $n$ equivalent pairs of equivalent tangles. This can be done in polynomially many steps. In other cases, it is reduced to \textsc{vertex colored plane pseudograph isotopy with bounded color valence}. It is known to have polynomial complexity as explained in subsection \ref{graphs}. Additionally, some edge labels need to be compared, which is still polynomial (N, W, E, S). Denote the larger of these polynomials by $P$.

The polynomials for these two substeps must be multiplied by $4n^2$, \textit{i.e.} the upper bound on number of pairs of tangles or product regions to compare, and added.

Step 5 complexity is bounded by the same polynomial $P$ as Step 4b.
\end{proof}

\bibliographystyle{amsplain}
  \bibliography{./FlypesBibFile}{}

\vspace{0.1in}

Touseef Haider\\
Department of Mathematics and Computer Science\\
Rutgers University-Newark \\
101 Warren Street, Newark, NJ  07102, USA\\
th582@newark.rutgers.edu

\vspace{0.1in}

Anastasiia Tsvietkova\\
Department of Mathematics and Computer Science\\
Rutgers University-Newark \\
101 Warren Street, Newark, NJ  07102, USA\\
a.tsviet@rutgers.edu

\end{document}